\newtheorem{theorem}{Theorem}[section]
\newtheorem{lemma}[theorem]{Lemma}
\newtheorem{proposition}[theorem]{Proposition}
\newtheorem{corollary}[theorem]{Corollary}
\newtheorem{definition}[theorem]{Definition}
\newtheorem{remark}[theorem]{Remark}
\newtheorem{example}[theorem]{Example}
\newtheorem{question}[theorem]{Question}
\newcommand{\N}{\mathbb{N}}
\newcommand{\Z}{\mathbb{Z}}
\newcommand{\Dp}{\operatorname{DP}}
\begin{document}

\title{On a Rice theorem for dynamical properties of SFTs on groups}
\author{Nicanor Carrasco-Vargas\footnote{Mathematics faculty, Jagiellonian University, Krak\'ow, Poland; \href{mailto:nicanor.vargas@uj.edu.pl}{nicanor.vargas@uj.edu.pl}.}}
\date{}
\maketitle
\begin{abstract}
	Let $G$ be a group with undecidable domino problem, such as $\mathbb{Z}^2$. We prove that all nontrivial dynamical properties for sofic $G$-subshifts are undecidable, that this is not true for $G$-SFTs, and an undecidability result for dynamical properties of $G$-SFTs similar to the Adian-Rabin theorem. Furthermore we prove a Rice-like result for dynamical invariants asserting that every computable real-valued invariant for $G$-SFTs that is monotone by disjoint unions and products is constant. 
\end{abstract}	
	\section{Introduction}\label{sec:Introduction}
	
	There has been a recent interest in extending the study of SFTs on $\Z$ to SFTs on other groups. An important obstruction that has arisen for this project is of an algorithmic nature. Indeed, many dynamical questions become undecidable when we move from $\Z$-SFTs to $\Z^2$-SFTs. Lind \cite{lind_multidimensional_2004} called this the  
	``swamp of undecidability''. 
	One way to better understand this swamp is to ask whether we
	have a result analogous to Rice's theorem, which states that \textit{every} nontrivial semantic property of computer programs is algorithmically undecidable
	\cite{rice_classes_1953}. Nontrivial means that some element satisfies the property and some element does not. This result has been paradigmatic in the
	sense that \emph{Rice-like theorems} have been discovered in a variety
	of mathematical contexts \cite{kari_rice_1994,delacourt_rice_2011,menibus_characterization_2018,hirst_ricestyle_2009,lafitte_computability_2008,Gamard2021RiceLikeTF,delvenne_quasiperiodic_2004,rice_classes_1953,adian_decision_2000,rabin_recursive_1958,guillon_revisiting_2010,adyan_algorithmic_1955}. In this work we consider the following question:
	
	\begin{question}
		\label{que:main-question} Is there a Rice theorem for dynamical properties of SFTs on $\Z^2$? What about SFTs on other groups?
	\end{question}
	That is, the goal of this note is converting the metaphor \textit{swamp of undecidability} from \cite{lind_multidimensional_2004} to precise mathematical statements. 
	
	\subsubsection*{Known results}
	These results are stated in terms of sets of tilings of $\mathbb{Z}^{2}$. We recall that a \textbf{tile }(or
	Wang tile) is a unit square with colored edges, a \textbf{tileset}
	$\tau$ is a finite set of tiles, a \textbf{tiling} is a function $\Z^{2}\to\tau$ satisfying the rule that two tiles
	that share an edge must have the same color at that edge, and a \textbf{set of tilings} is the collection of all tilings associated to a tileset. Sets of tilings of $\Z^{2}$ and $\Z^{2}$-SFTs are related by the fact that every set of tilings of $\Z^{2}$ is a $\Z^{2}$-SFT and every $\Z^{2}$-SFT
	is topologically conjugate to a set of tilings \cite{aubrun_domino_2018}.
	\begin{enumerate}
		\item In \cite{cervelle_tilings_2004} the authors prove the
		undecidability of set equality and set inclusion for sets
		of tilings of $\Z^{2}$, and ask for a Rice theorem for tilings.
		\item In \cite{lafitte_computability_2008} the authors consider
		properties of sets of tilings of $\Z^{2}$ that are preserved by ``zoom''.
		Informally two tilesets are equivalent in this sense when the
		tiles from each tileset produce unique macro-tiles that satisfy the
		same matching rules from the other tileset. The authors prove that
		all nontrivial properties of sets of tilings  preserved by ``zoom''
		are undecidable.% Amazingly, they also proved a fixed point result for properties of sets of tilings preserved by ``zoom'' that is analogous to Kleene's fixed point theorem about indices of computable functions. 
		\item In \cite{delvenne_quasiperiodic_2004} the authors prove
		the undecidability of every property of sets of tilings of $\Z^{2}$
		that is stable by topological conjugacy, by direct
		products among nonempty systems, and is not satisfied by the
		empty set. 
	\end{enumerate}
	
	\subsubsection*{Results}
	In this work we consider dynamical properties of SFTs, that is, properties preserved by topological conjugacy. We investigate the (un)decidability of these properties for SFTs on finitely generated groups. A first observation is that a Rice theorem is not possible in this setting: the property of having some fixed point is nontrivial and  decidable from presentations (\Cref{prop:decidable-fixed-points}). Thus we need to add some hypotheses to the properties considered in order to prove a Rice-like theorem.
	
	Our undecidability results are related to the domino problem, the algorithmic problem  of determining whether an SFT is empty. Berger \cite{berger_undecidability_1966} proved that the domino
	problem is undecidable for $\Z^{d}$, $d\geq2$, and this has been
	recently extended to a large class of finitely generated groups \cite{aubrun_domino_2019,aubrun_tiling_2013,ballier_domino_2018,bartholdi_domino_2023,bitar_contributions_2023,cohen_large_2017,jeandel_translationlike_2015,jeandel_undecidability_2020,margenstern_domino_2008,aubrun_strongly_2023}.
	In \cite{ballier_domino_2018} the authors conjecture that all non virtually free groups have
	undecidable domino problem. 
	
	Our main result states that if $G$ has undecidable domino problem, then all dynamical properties of $G$-SFTs that satisfy a mild technical condition are undecidable (\Cref{thm:adian-rabin-for-sfts}). Our result covers several dynamical properties of common interest, such as transitivity, minimality, and others (see \Cref{sec:examples}), and can be applied to all nontrivial dynamical properties that
	are preserved by topological factor maps or topological extensions (\Cref{cor:rice-for-monotone-properties}). We mention that \Cref{thm:adian-rabin-for-sfts} exhibits an analogy with the
	Adian-Rabin theorem for group properties \cite{adyan_algorithmic_1955,rabin_recursive_1958}, the ``Rice-like
	theorem in group theory''. A number of analogies have been observed between group theory and symbolic dynamics \cite{jeandel_enumeration_2017,jeandel_characterization_2019,jeandel_computability_2016}. 
	
	We also consider dynamical invariants of SFTs taking values in partially ordered
	sets. We show that if $G$ has undecidable domino problem then every abstract real-valued dynamical invariant for $G$-SFTs that is nondrecreasing by disjoint unions and direct products must be constant  (\Cref{invariants-monotone-by-products-and-unions-are-constants}). This result covers topological entropy for amenable groups and other related invariants. We mention that for some amenable groups it is known a much stronger fact: the existence of SFTs whose entropy is a non-computable real number \cite{hochman_characterization_2010,barbieri_entropies_2021,bartholdi_shifts_2024}. The conclusion of \Cref{invariants-monotone-by-products-and-unions-are-constants} is much weaker, but it is a much more general result. 
	
	We also consider the larger class of sofic subshifts. We prove that if the domino problem for $G$ is undecidable, then a Rice theorem for dynamical properties of sofic subshift holds. That is, all nontrivial
	dynamical properties are undecidable (\Cref{thm:Rice-theorem-sofic-subshifts}).

	All our undecidability results are proved through a many-one reduction to the domino problem for $G$. In informal words our results show that most dynamical properties are harder than the domino problem. As in the original Rice's theorem our proofs are very short. They are  based on the computability of direct products and disjoint unions at the level of presentations. In contrast, although tiles can be defined on Cayley graphs of finitely generated groups, the proofs of the results in \cite{lafitte_computability_2008,cervelle_tilings_2004}
	are specific to $\Z^{2}$, and it is unclear whether they could
	be generalized to a group that is not residually finite. 
	\section{Preliminaries} We start reviewing shift spaces, see also \cite{ceccherini-silberstein_cellular_2010}. Let $G$ be a finitely generated group and $A$ be a finite alphabet. We endow $A^G=\{x\colon G\to A\}$ with the prodiscrete topology and the continuous shift action $G\curvearrowright A^G$ defined by $(gx)(h)=x(g^{-1}h)$. A closed and shift-invariant subset of $A^G$ is called a \textbf{subshift}. A \textbf{morphism} of subshifts $X\subset A^G$ and $Y\subset B^G$ is a map $\phi\colon X\to Y$ that is continuous and commutes with the shift action. We call it \textbf{conjugacy} when it is bijective, \textbf{embedding} when it is injective, and \textbf{factor} when it is surjective. In the last case we also say that $Y$ is a \textbf{factor} of $X$ and that $X$ \textbf{extends} $Y$. By the Curtis-Hedlund-Lyndon theorem $\phi$ is a morphism if and only if there is a finite set $D\subset G$ and a \textbf{local function} $\mu\colon A^{D}\to B$ such that $\phi(x)(g)=\mu((g^{-1}x)|_{D})$ for all $g\in G$. A property of subshifts is called a \textbf{dynamical property} when it is invariant by conjugacy.

	We now review subshifts of finite type (SFTs) and their {presentations}, see also  \cite{aubrun_notion_2017,aubrun_domino_2018}. Let $S$ be a finite and symmetric generating set for $G$. Given a word $w\in S^{\ast}$ we denote by $\underline{w}$ the corresponding group element. 
	\begin{definition}
		A \textbf{pattern presentation }is a function $p\colon W\to A$, where
		$W\subset S^{\ast}$ is a finite set of words and $A\subset\N$ is
		a finite alphabet. We say that $p$ \textbf{appears }in $x\in A^{G}$
		at $g\in G$ when $x(g\underline{w})=p(w)$ for every $w\in W$. An \textbf{SFT presentation }is a tuple $(A,\mathcal{F})$ of a finite alphabet 
		$A\subset \N$ and a finite set $\mathcal{F}$ of pattern
		presentations associated to $S$. It determines the subshift $X_{(A,\mathcal{F})}$
		of all configurations in $A^{G}$ where no pattern presentation
		from $\mathcal{F}$ appears. We call $X_{(A,\mathcal{F})}$ a \textbf{subshift of finite type} (SFT). The \textbf{domino problem }$\Dp(G)$ is the set of all presentations
		$(A,\mathcal{F})$ such that $X_{(A,\mathcal{F})}$ is
		empty. %
	\end{definition}
	A survey on the domino problem can be found in  \cite{aubrun_domino_2018}. Observe that the empty set is an SFT with our definitions. Some
	authors consider the empty set as a subshift \cite{cohen_large_2017},
	while others exclude it by definition \cite{aliakbar_set_2018}. Here
	we follow the first convention. This is a natural choice in our setting: an algorithm
	able to detect a particular property from $G$-SFT presentations should give the same output when given presentations of the empty subshift. Furthermore we can not exclude these presentations without assuming that the domino
	problem for $G$ is decidable. A consequence of this convention is
	that a dynamical property must assign yes/no value to the empty subshift.
	
	When we consider the decidability of a dynamical property
	$\mathscr{P}$ from presentations of $G$-SFTs we formally refer to the set $\{(A,\mathcal{F})\mid X_{(A,\mathcal{F})}\text{ satisfies }\mathscr{P}\}$. Our undecidability results will follow from many-one reductions of the form $\mathscr{P}\geq_m\Dp(G)$, but this notion is not required to understand the proofs.  
	\section{Undecidability of dynamical properties of SFTs }
	
	In this section we study the undecidability of dynamical properties
	of SFTs. The following example shows that a Rice theorem fails in this setting. 
	\begin{proposition}
		\label{prop:decidable-fixed-points} Let $G$ be a finitely generated
		group. Then the property of $G$-SFTs of containing a fixed point is decidable
		from presentations. 
	\end{proposition}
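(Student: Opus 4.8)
The plan is to reduce the question of containing a fixed point to a finite check on the SFT presentation. A fixed point of the shift action is precisely a configuration $x\in A^{G}$ that is constant, since $gx=x$ for all $g\in G$ means $x(g^{-1}h)=x(h)$ for all $g,h$, which forces $x$ to take a single value $a\in A$. Thus the SFT $X_{(A,\mathcal{F})}$ contains a fixed point if and only if there exists some symbol $a\in A$ such that the constant configuration $x_{a}$ (defined by $x_{a}(g)=a$ for all $g$) lies in $X_{(A,\mathcal{F})}$.

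The key step is to observe that membership of a constant configuration in the SFT can be tested by examining each forbidden pattern presentation individually, and that there are only finitely many symbols and finitely many forbidden patterns to check. Given $a\in A$, the constant configuration $x_{a}$ fails to lie in $X_{(A,\mathcal{F})}$ exactly when some pattern presentation $p\colon W\to A$ in $\mathcal{F}$ appears in $x_{a}$ at some $g\in G$. Since $x_{a}(g\underline{w})=a$ for every $w\in W$ regardless of $g$, the pattern $p$ appears in $x_{a}$ if and only if $p(w)=a$ for every $w\in W$, i.e.\ $p$ is the constant pattern with value $a$ on its domain. Hence $x_{a}\in X_{(A,\mathcal{F})}$ if and only if no $p\in\mathcal{F}$ is constant with value $a$.

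First I would make the above equivalence precise, then describe the decision procedure: given a presentation $(A,\mathcal{F})$, loop over each symbol $a\in A$, and for each $a$ check whether some $p\in\mathcal{F}$ satisfies $p(w)=a$ for all $w$ in its (finite) domain $W$; output \textbf{yes} as soon as some $a$ survives all the forbidden patterns, and \textbf{no} if every symbol is eliminated. Each of these checks is a finite comparison over the finite data $(A,\mathcal{F})$, so the whole procedure terminates. I would also handle the boundary case $A=\emptyset$, where $X_{(A,\mathcal{F})}$ is empty and the answer is \textbf{no}, which the loop covers vacuously.

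I do not expect any serious obstacle here; the content is entirely the characterization of fixed points as constant configurations, together with the local nature of the SFT condition. The only point requiring a little care is confirming that the shift-invariance of the forbidden condition means it suffices to test the pattern at a single group element rather than all of $G$, which follows immediately because the value of $x_{a}$ does not depend on the base point $g$. Everything else is a routine finite search, so the argument is short.
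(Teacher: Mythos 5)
Your proposal is correct and follows essentially the same argument as the paper: fixed points are exactly the constant configurations $x_a$, a forbidden pattern presentation appears in $x_a$ if and only if it is constant with value $a$, and the decision procedure is a finite search over $A$ and $\mathcal{F}$. The only (harmless) additions are the explicit verification that fixed points are constant and the vacuous boundary case $A=\emptyset$.
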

	
	\begin{proof}
		Recall that a fixed point of an SFT is a configuration $x$ such that $gx=x$ for all $g\in G$. Let $(A,\mathcal{F})$ be a $G$-SFT presentation, and for each $a\in A$ 
		denote by $x_{a}\colon G\to A$ the configuration with constant value
		$a$. Note that a fixed point in $X_{(A,\mathcal{F})}$
		is equal to $x_{a}$ for some $a\in A$. Clearly a pattern
		presentation $p\colon W\to A$ appears in $x_{a}$ if and only 
		if $p$ has constant value $a$, and this is
		a decidable property of $p$. Thus the following algorithm proves the statement: on input $(A,\mathcal{F})$ check whether for some $a\in A$ the set $\mathcal{F}$ fails
		to contain a pattern presentation with constant value $a$.
	\end{proof}
	In order to prove our undecidability results we need to verify the computability of direct products and disjoint unions at the level of presentations. 
	\begin{lemma}
		\label{lem:products}Let $G$ be a finitely generated group. There is an effective procedure which, given
		two presentations $(A,\mathcal{F})$ and $(B,\mathcal{C})$ of $G$-SFTs,
		outputs a presentation of a $G$-SFT that is topologically conjugate
		to the direct product $X_{(A,\mathcal{F})}\times X_{(B,\mathcal{C})}$. 
	\end{lemma}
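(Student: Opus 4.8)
The plan is to give an explicit construction of a product presentation and argue that the resulting SFT is conjugate (in fact equal up to the natural alphabet identification) to the direct product. The key point is that the direct product of two subshifts $X\subset A^G$ and $Y\subset B^G$ embeds naturally into $(A\times B)^G$ via the identification of a pair of configurations $(x,y)$ with the single configuration $g\mapsto (x(g),y(g))$, and this identification is a conjugacy onto its image. So first I would fix the product alphabet $A\times B$ (after renaming its elements as natural numbers so that it is a legal alphabet in the sense of the \Cref{prop:decidable-fixed-points}'s presentation definition, i.e.\ a finite subset of $\N$; this renaming is clearly effective and induces a conjugacy). The content of the lemma is then purely the effectiveness of writing down the forbidden patterns for the product inside this alphabet.

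Next I would describe how to transfer the forbidden pattern presentations. Every pattern presentation $p\colon W\to A$ for $X_{(A,\mathcal{F})}$ constrains only the first coordinate, so I lift it to a family of pattern presentations over $A\times B$: for each choice of values on $W$ in the second coordinate, I get a pattern presentation $\tilde p\colon W\to A\times B$ whose first-coordinate values agree with $p$. Forbidding all these lifts forbids exactly the configurations whose first-coordinate projection contains $p$. Symmetrically I lift each $q\in\mathcal{C}$ over the second coordinate. Taking the union of all these lifted pattern presentations gives a finite set $\mathcal{D}$ of pattern presentations over $A\times B$, and the procedure outputting $(A\times B,\mathcal{D})$ is plainly effective since $W$ and the alphabets are finite and the lifting is a finite combinatorial operation.

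I would then verify correctness: a configuration $z\in(A\times B)^G$ avoids all patterns in $\mathcal{D}$ if and only if its first projection $\pi_A\circ z$ avoids every $p\in\mathcal{F}$ and its second projection $\pi_B\circ z$ avoids every $q\in\mathcal{C}$, which holds if and only if $\pi_A\circ z\in X_{(A,\mathcal{F})}$ and $\pi_B\circ z\in X_{(B,\mathcal{C})}$. The map $z\mapsto(\pi_A\circ z,\pi_B\circ z)$ is continuous, shift-commuting, and bijective from $X_{(A\times B,\mathcal{D})}$ onto $X_{(A,\mathcal{F})}\times X_{(B,\mathcal{C})}$, hence a conjugacy; both directions are radius-$0$ local maps, so the Curtis-Hedlund-Lyndon condition is immediate.

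This is essentially a bookkeeping argument, so there is no deep obstacle; the only point requiring mild care is the lifting step, where one must be sure to forbid \emph{all} extensions of a pattern to the second coordinate rather than a single one, since forbidding just one extension would forbid too little. A second minor subtlety is that the words in $W$ for $p$ and $q$ need not coincide, but since each pattern presentation is lifted independently over its own domain this causes no difficulty. I would keep the write-up short, stating the construction of $\mathcal{D}$ and then asserting the equality of configuration sets, mirroring the brevity emphasized in the introduction.
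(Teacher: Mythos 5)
Your proposal is correct and follows essentially the same route as the paper's proof: the paper also forms a product alphabet (encoded into $\N$ via a computable pairing bijection $\alpha$), lifts each forbidden pattern of either factor to all patterns over the product alphabet that project to it, and checks that the coordinatewise projection map is a conjugacy. The points you flag for care (forbidding \emph{all} lifts of a pattern, and treating each pattern's domain independently) are exactly what the paper's construction does implicitly.
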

	
	\begin{proof}
		Let $\alpha\colon\N^{2}\to\N$ be a computable bijection, and let
		$\pi_{1},\pi_{2}\colon\N\to\N$ be the computable functions defined
		by $\pi_{i}(\alpha(n_{1},n_{2}))=n_{i}$, $i=1,2$. On input $(A,\mathcal{F})$
		and $(B,\mathcal{C})$, our algorithm starts defining the alphabet of the new SFT as $C=\{\alpha(a,b)\mid a\in A,b\in B\}\subset\N$.
		Then we compute a set $\mathcal{G}$ of pattern presentations as
		follows. For each $p\colon W\to A$ in $\mathcal{F}$ (resp. $p\colon S\to B$
		in $\mathcal{C}$), we add to $\mathcal{G}$ every function $q\colon W\to C$
		such that $\pi_{1}\circ q=p$ (resp. $\pi_{2}\circ q=p$). The output is $(C,\mathcal{G})$. It is clear that $X_{(C,\mathcal{G})}$ is conjugate
		to the direct product $X_{(A,\mathcal{F})}\times X_{(B,\mathcal{C})}$.
		Indeed, the map $\phi\colon X_{(C,\mathcal{G})} \to
		X_{(A,\mathcal{F})} \times X_{(B,\mathcal{C})}$
		given by $\phi (x)(g)=(\pi_{1} (x(g)),\pi_{2} (y(g)))$ is a topological
		conjugacy. 
	\end{proof}
	The proof of the following result is very similar, and is left to the reader.
	\begin{lemma}
		
		\label{lem:disjoint-unions}Let $G$ be a finitely generated group. There is an effective procedure which,
		given two SFT presentations $(A,\mathcal{F})$ and $(B,\mathcal{C})$,
		outputs a presentation of an SFT that is topologically conjugate to
		the disjoint union $X_{(A,\mathcal{F})}\sqcup X_{(B,\mathcal{C})}$.
	\end{lemma}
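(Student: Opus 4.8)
The plan is to imitate the product construction of \Cref{lem:products}, but instead of fibering the alphabet I would take a \emph{disjoint} union of two relabeled copies of $A$ and $B$ and add forbidden patterns that prevent the two copies from appearing together in a single configuration. Concretely, using a computable bijection $\alpha\colon\N^2\to\N$ as before, I would set $A'=\{\alpha(a,0)\mid a\in A\}$ and $B'=\{\alpha(b,1)\mid b\in B\}$, so that $A'$ and $B'$ are disjoint finite subsets of $\N$, and let $C=A'\cup B'$ be the alphabet of the new SFT. The maps $a\mapsto\alpha(a,0)$ and $b\mapsto\alpha(b,1)$ are computable bijections onto $A'$ and $B'$, and I would use them to transport each pattern presentation in $\mathcal{F}$ into one valued in $A'$, and each pattern presentation in $\mathcal{C}$ into one valued in $B'$. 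These relabeled families are the first part of the output set $\mathcal{G}$.

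To keep the two copies from mixing, I would add to $\mathcal{G}$, for every generator $s\in S$ and every pair $(a',b')\in A'\times B'$, the two pattern presentations on the word set $\{\epsilon,s\}$ (where $\epsilon$ denotes the empty word, so that $\underline{\epsilon}$ is the identity) defined by $\epsilon\mapsto a',\ s\mapsto b'$ and by $\epsilon\mapsto b',\ s\mapsto a'$. By the definition of appearance, forbidding these patterns rules out any configuration in which two positions $g$ and $g\underline{s}$ carry symbols from different copies. The output is $(C,\mathcal{G})$, and the entire procedure is effective because $S$, $A$, $B$, $\mathcal{F}$, $\mathcal{C}$ and $\alpha$ are all finite or computable data.

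It then remains to verify correctness. Since $S$ generates $G$, the Cayley graph of $G$ with respect to $S$ is connected; combined with the mixing-forbidding patterns, this forces every $x\in X_{(C,\mathcal{G})}$ to take \emph{all} of its values in $A'$ or \emph{all} of them in $B'$. The configurations of the first kind that additionally avoid the relabeled copies of $\mathcal{F}$ are exactly the images of $X_{(A,\mathcal{F})}$ under the $1$-block relabeling $A\to A'$, and symmetrically for configurations of the second kind and $X_{(B,\mathcal{C})}$. Hence $X_{(C,\mathcal{G})}$ is a copy of $X_{(A,\mathcal{F})}$ together with a disjoint copy of $X_{(B,\mathcal{C})}$, and the inverse relabeling $A'\to A$, $B'\to B$ is a shift-commuting homeomorphism onto $X_{(A,\mathcal{F})}\sqcup X_{(B,\mathcal{C})}$, giving the required conjugacy.

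The only step that differs substantively from the product case, and hence the one requiring genuine care, is the mixing argument: I would make explicit that forbidding all mismatched generator-edges, \emph{together with} the connectedness of the Cayley graph, really does exclude every partially mixed configuration rather than merely preventing mixing locally. This is precisely where the standing hypothesis that $S$ generates $G$ is used, and it is the natural obstacle to watch in an otherwise routine adaptation of \Cref{lem:products}.
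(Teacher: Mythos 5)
Your proposal is correct, and it is precisely the argument the paper has in mind: the paper gives no explicit proof here, stating only that it is ``very similar'' to \Cref{lem:products} and leaving it to the reader, and your construction (disjoint relabeled alphabets via $\alpha$, transported forbidden patterns, plus the generator-edge patterns forbidding mixed symbols, with connectedness of the Cayley graph upgrading the local constraint to a global one) is exactly that natural adaptation. Your closing remark correctly identifies the one genuinely new ingredient relative to the product case, namely the use of the generating hypothesis on $S$ to rule out partially mixed configurations.
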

	
	\begin{remark}
		According to the definitions the direct product of two SFTs is not an SFT, but a topological
		dynamical system that is conjugate to an SFT. We will ignore this subtlety for the sake of clarity. 
	\end{remark}
	
	In the following definition we propose the term Berger property because of the analogy with Markov properties of finitely presented groups and the Adyan-Rabin undecidability theorem  \cite{lyndon_combinatorial_1977}. 
	\begin{definition}\label{def:Berger}
		A dynamical property $\mathscr{P}$ of $G$-SFTs is called a \textbf{Berger
		}property if there are two $G$-SFTs $X_{-}$ and $X_{+}$ satisfying
		the following:
		\begin{enumerate}
			\item $X_{+}$ satisfies $\mathscr{P}$.
			\item Every SFT that factors onto $X_{-}$ fails to satisfy $\mathscr P$.
			
			\item There is a morphism from $X_{+}$ to $X_{-}$. 
		\end{enumerate}
		The subshift $X_{+}$ is allowed to be empty.  
	\end{definition}
	The main result of this section is the following.
	\begin{theorem}
		\label{thm:adian-rabin-for-sfts} Let $G$ be a finitely generated group with undecidable domino problem. Then every Berger property of $G$-SFTs is undecidable. 
	\end{theorem}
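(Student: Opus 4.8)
The plan is to establish a many-one reduction $\mathscr{P} \geq_m \Dp(G)$, exploiting the two witness SFTs $X_-$ and $X_+$ furnished by the Berger property together with the effective product operation from \Cref{lem:products}. The key idea is that, given an arbitrary SFT presentation $(A,\mathcal{F})$, I want to manufacture a new presentation whose associated SFT satisfies $\mathscr{P}$ precisely when $X_{(A,\mathcal{F})}$ is empty. The natural candidate is the product $X_{(A,\mathcal{F})} \times X_-$, with $X_+$ used as a fallback when the first factor is empty.

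First I would fix presentations for the witness subshifts $X_-$ and $X_+$; these are constants depending only on $\mathscr{P}$ and $G$, so hard-coding them into the reduction algorithm is legitimate. On input $(A,\mathcal{F})$, the algorithm uses \Cref{lem:products} to compute a presentation $Z$ of the product $X_{(A,\mathcal{F})} \times X_-$, and outputs a presentation deciding $\mathscr{P}$ on $Z$ would be reduced to deciding emptiness of $X_{(A,\mathcal{F})}$. The correctness argument splits into two cases. If $X_{(A,\mathcal{F})} = \emptyset$, then $X_{(A,\mathcal{F})} \times X_- = \emptyset$, and I must ensure the output then satisfies $\mathscr{P}$; this is where $X_+$ enters, since $X_+$ satisfies $\mathscr{P}$ and (by condition 3 and the fact that $X_+$ may be empty) the empty set should behave like $X_+$. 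If $X_{(A,\mathcal{F})} \neq \emptyset$, then the projection onto the second coordinate exhibits $X_{(A,\mathcal{F})} \times X_-$ as an SFT that factors onto $X_-$, so by condition 2 it fails $\mathscr{P}$.

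The main obstacle is reconciling the two cases into a single computable output, since the reduction cannot itself test which case holds (that would amount to solving $\Dp(G)$). The cleanest device is to take the disjoint union with $X_+$: using \Cref{lem:disjoint-unions}, the algorithm outputs a presentation of $Y = (X_{(A,\mathcal{F})} \times X_-) \sqcup X_+$. When $X_{(A,\mathcal{F})}$ is empty, $Y$ is conjugate to $X_+$, which satisfies $\mathscr{P}$. When $X_{(A,\mathcal{F})}$ is nonempty, $Y$ factors onto $X_-$: the second-coordinate projection on the product factor maps onto $X_-$, and $X_+$ maps onto $X_-$ by condition 3, so these assemble into a factor map $Y \to X_-$; hence $Y$ fails $\mathscr{P}$ by condition 2. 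I would need to verify carefully that a factor map defined piecewise on the two components of a disjoint union is again a morphism, which follows because each piece is shift-commuting and continuous and the components are clopen.

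Thus the map $(A,\mathcal{F}) \mapsto Y$ is computable by \Cref{lem:products,lem:disjoint-unions}, and $X_{(A,\mathcal{F})}$ is empty if and only if $Y$ satisfies $\mathscr{P}$. Since $\mathscr{P}$ is invariant under conjugacy, whether $Y$ satisfies $\mathscr{P}$ depends only on its conjugacy class, so the subtlety in the \Cref{rem} about products not being literal SFTs causes no difficulty. This exhibits $\Dp(G) \leq_m \{(A,\mathcal{F}) \mid X_{(A,\mathcal{F})} \text{ satisfies } \mathscr{P}\}$, and since $\Dp(G)$ is undecidable, so is $\mathscr{P}$.
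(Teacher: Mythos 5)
Your proposal is correct and takes essentially the same route as the paper: the identical reduction $Z = X_{+} \sqcup (X_{(A,\mathcal{F})} \times X_{-})$, the same two-case analysis (conjugate to $X_{+}$ when empty, factors onto $X_{-}$ when nonempty), and the same appeal to \Cref{lem:products} and \Cref{lem:disjoint-unions} for computability of the presentation. One minor wording slip: condition 3 of \Cref{def:Berger} gives only a morphism $X_{+} \to X_{-}$, not a surjection, but this is harmless since surjectivity of your assembled map already follows from the projection component.
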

	
	\begin{proof}
		Let $\mathscr{P}$ be a Berger property, and let $X_{+}$, $X_{-}$ be as in \Cref{def:Berger}. Given an SFT presentation
		$(A,\mathcal{F})$ we define $Z$ as the disjoint union of  $X_{+}$ and $X_{(A,\mathcal{F})}^ {}\times X_{-}$. We claim that $Z$ has property $\mathscr{P}$ if and only if
		$X_{(A,\mathcal{F})}^ {}$ is empty. Indeed, if $X_{(A,\mathcal{F})}$
		is empty then $Z$ is topologically conjugate to $X_{+}$. If $X_{(A,\mathcal{F})}^ {}$
		is nonempty, then $Z$ factors over $X_{-}$. This follows from two
		facts: that for $X_{(A,\mathcal{F})}$ nonempty $X_{(A,\mathcal{F})}^ {}\times X_{-}$
		factors over $X_{-}$, and that there is a topological morphism from
		$X_{+}$ to $X_{-}$. If $\mathscr{P}$ was a decidable property, then we could decide whether $X_{(A,\mathcal{F})}^ {}$
		is empty by computing a presentation for $Z$, which is possible thanks to
		\Cref{lem:products} and \Cref{lem:disjoint-unions}, and then
		checking whether $Z$ satisfies $\mathscr{P}$. This contradicts the undecidability
		of $\Dp(G)$.
	\end{proof}
	In the language of many-one reductions we proved that a Berger property $\mathscr{P}$ satisfies $\mathscr{P}\geq_m\Dp(G)$ and thus it is $\Dp(G)$-hard. Note that \Cref{thm:adian-rabin-for-sfts} applies to every nontrivial property that is preserved to topological
	factors and is satisfied by the empty subshift. Since the negation of a property preserved to
	topological factors is preserved to topological extensions, we obtain
	the following result.
	\begin{corollary}
		\label{cor:rice-for-monotone-properties} Let $G$ be a finitely generated group with undecidable domino problem. Every nontrivial dynamical property
		for $G$-SFTs which is preserved to topological factors (resp. extensions),
		and which is satisfied (resp. not satisfied) by the empty subshift,
		is undecidable. 
	\end{corollary}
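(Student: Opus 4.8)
The plan is to derive \Cref{cor:rice-for-monotone-properties} directly from \Cref{thm:adian-rabin-for-sfts} by showing that the stated hypotheses force the property to be a Berger property, so that the undecidability conclusion follows immediately. I would treat the two cases (preserved to factors versus preserved to extensions) in parallel, using the duality that the negation of a factor-preserved property is extension-preserved; this lets me reduce the second case to the first by passing to $\neg\mathscr{P}$, since a property is undecidable if and only if its negation is.

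For the main case, suppose $\mathscr{P}$ is nontrivial, preserved under passing to topological factors, and satisfied by the empty subshift. I need to exhibit the two witnessing systems $X_{+}$ and $X_{-}$ from \Cref{def:Berger}. Since $\mathscr{P}$ is satisfied by the empty subshift, I would set $X_{+}=\varnothing$, which is permitted by the last sentence of the definition and trivially verifies condition (1). Since $\mathscr{P}$ is nontrivial, some $G$-SFT fails $\mathscr{P}$; I would take $X_{-}$ to be such a subshift. Condition (3), the existence of a morphism $X_{+}\to X_{-}$, then holds because the empty subshift maps into everything via the empty map, which is vacuously continuous and shift-commuting.

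The step I expect to require the most care is condition (2): every SFT that factors onto $X_{-}$ fails $\mathscr{P}$. This is exactly where the factor-preservation hypothesis enters. If some $G$-SFT $Y$ factors onto $X_{-}$ and $Y$ satisfied $\mathscr{P}$, then because $\mathscr{P}$ is preserved to factors, $X_{-}$ would satisfy $\mathscr{P}$, contradicting our choice of $X_{-}$ as a witness to nontriviality on the failing side. Hence no such $Y$ satisfies $\mathscr{P}$, giving condition (2). With all three conditions checked, $\mathscr{P}$ is a Berger property and \Cref{thm:adian-rabin-for-sfts} yields its undecidability.

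For the dual case, where $\mathscr{P}$ is preserved to extensions and \emph{not} satisfied by the empty subshift, I would apply the argument above to $\neg\mathscr{P}$: this negated property is preserved to factors and \emph{is} satisfied by the empty subshift, and it remains nontrivial. The first case shows $\neg\mathscr{P}$ is a Berger property and hence undecidable, and undecidability of $\neg\mathscr{P}$ is equivalent to undecidability of $\mathscr{P}$, completing the proof. The one subtlety worth stating explicitly is that "nontrivial" must be read as asserting both a satisfying and a non-satisfying $G$-SFT exist, so that the witness $X_{-}$ is genuinely available; I would make sure this is spelled out before invoking it.
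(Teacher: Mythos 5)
Your proposal is correct and follows exactly the paper's route: the paper also derives the corollary by observing that such a property is a Berger property with $X_{+}=\varnothing$ and $X_{-}$ any SFT failing $\mathscr{P}$ (factor-preservation giving condition (2), the empty morphism giving condition (3)), and handles the extension case by passing to the negation. Your write-up merely makes explicit the verification that the paper leaves compressed into the two sentences preceding the corollary.
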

	The assumption on the empty subshift is necessary: the decidable property from  \Cref{prop:decidable-fixed-points} is preserved to factors.

	\subsection{Examples}\label{sec:examples}
	Here we present some examples. We start observing that the decidability of a property could be altered if we include or exclude the empty subshift:
	
	\begin{example}\label{property-that-becomes-undecidable-after-we-allow-empty-subshift}
		Let $\mathscr P$ be the property of having some fixed point. We proved that $\mathscr{P}$ is decidable in \Cref{prop:decidable-fixed-points}. However ``$\mathscr{P}$ or empty'' is a Berger property. It suffices to take $X_{+}=\emptyset$, and $X_{-}$ as a nonempty SFT having no fixed point.   
	\end{example}
	There is a simple class of properties where this situation is prevented:
	
	\begin{remark}\label{criterion}
		Let $\mathscr P$ be a Berger property, and let $X_{+}$ and $X_{-}$ as in Definition \ref{def:Berger}. If  $X_{+}$ is nonempty, then both ``$\mathscr P$ or empty'' and ``$\mathscr P$ and nonempty'' are Berger properties. This is shown by the same pair of SFTs $X_{+}$ and $X_{-}$.
	\end{remark}
	Many commonly studied dynamical properties can be shown to satisfy this criterion, and thus they are Berger properties regardless of the value assigned to the empty subshift. Now we review a few of them. 
	\begin{example}
		A $G$-SFT is topologically transitive when it contains a configuration
		with dense orbit. Transitivity is a Berger property. Indeed, it suffices to choose $X_{+}$ as an SFT with exactly one configuration, and choose $X_{-}$ as an SFT with two fixed points. Note that topological extensions of $X_{-}$ are not transitive. 
	\end{example}
	\begin{example}
		A $G$-SFT is minimal if it has no proper nonempty subsystem. This is a Berger property by the same reasoning as in the previous example. 
	\end{example}
	
	\begin{example}
		A configuration $X\in A^G$ is called strongly aperiodic when $gx\ne x$ for every $g\in G$ different to the identity. Consider the property $\mathcal{AD}$ of having at least strongly aperiodic configuration. This property is known as the \textit{aperiodic domino problem}. The negation of $\mathcal{AD}$ is a Berger property: it suffices to take $X_{+}$ as an SFT with only one configuration, and $X_{-}=\{0,1\}^G$.  
		
		The complexity of $\mathcal{AD}$ for the group $\Z^d$ is studied in detail in  \cite{callard_aperiodic_2022}. The authors prove that this problem is $\Pi_1^0$-complete for $d=2$ and $\Sigma_1^1$-complete for $d\geq 4$. Its exact complexity is not known for $d=3$.  The argument presented here only shows that $\mathcal{AD}$ is $\Pi_1^0$-hard for $d\geq 2$.
	\end{example}
	\begin{example}
		Let $G=\Z^{d}$, $d\geq2$. A $G$-SFT has topologically complete
		positive entropy (TCPE) when every topological factor is either a
		singleton with the trivial action by $G$, or has positive topological
		entropy. We claim that TCPE is a Berger property. Indeed, let $X_{-}$ be the SFT $\{0,1\}^{G}\cup\{2,3\}^{G}$.
		This system fails to have TCPE because it factors onto the SFT with
		exactly two configurations and zero topological entropy. The same is true for all extensions of $X_{-}$. Now let $X_{+}=\{0,1\}^G$. It is well known that this system has TCPE, and thus we have proved that  TCPE is a Berger property. This argument only shows that TCPE is $\Sigma_1^0$-hard. The complexity of this property is studied in detail in \cite{westrick_topological_2020}, where it is shown that it is $\Pi_1^1$-complete.
	\end{example}
	\begin{example}\label{ex:conjugacy}
		Let $G$ be amenable. The following are Berger properties for every nonempty SFT $X$: 
		\begin{itemize}
			\item The property  $\mathscr C (X)$ of being conjugate to $X$.
			\item The property $\mathscr F(X)$ of being a factor of $X$.
			\item  The property $\mathscr I(X)$ of embedding into $X$.
		\end{itemize}
		In the three cases it suffices to let $X_{+}=X$, and let $X_{-}$ be the disjoint union of $X$ with an SFT with nonzero topological entropy. Note that $\mathscr C(X)$ and $\mathscr I(X)$ are conjugacy-invariant counterparts of properties studied in \cite[Section 3]{cervelle_tilings_2004} (see also \cite{jeandel2015hardness}). Admitting an 
		embedding \textit{from} $X$ is not in general a Berger property, see \Cref{sec:remarks}. 
	\end{example} 
	It is natural to ask what is the  complexity of the properties considered here as  \Cref{thm:adian-rabin-for-sfts}  only shows that Berger properties are $\Dp(G)$-hard.
	\section{Uncomputability of dynamical invariants of SFTs}
	Recall that a real number $x$ is computable when there is an algorithm providing rational approximations to $x$ with any desired precision. We say that a real-valued dynamical invariant $\mathcal I$ for $G$-SFTs  is computable from presentations when there is an algorithm which given a presentation for $X$, provides rational approximations to $\mathcal I(X)$ with any desired precision.

	The fundamental dynamical invariant for $\Z^2$-SFTs is topological entropy. This invariant is in general not computable since there are $\Z^2$-SFTs whose topological entropy is a non-computable number \cite{hochman_characterization_2010}.  The same is true for the related invariant of entropy dimension \cite{meyerovitch_growthtype_2011,gangloff_characterizing_2022}. In view of these results, it is natural to ask whether some dynamical invariant of $\Z^2$-SFTs is computable.

	The main result of this section is a Rice-like theorem for all real-valued invariants satisfying mild monotony conditions:
	
	\begin{theorem}\label{invariants-monotone-by-products-and-unions-are-constants}
		Let $G$ be a finitely generated group with undecidable domino problem. Every computable dynamical invariant for $G$-SFTs that is nondecreasing by disjoint unions and products with nonempty systems is constant.   
	\end{theorem}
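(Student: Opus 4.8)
The plan is to argue by contraposition. Suppose $\mathcal I$ is a computable dynamical invariant for $G$-SFTs that is nondecreasing under disjoint unions and under products with nonempty systems, but is \emph{not} constant. From these hypotheses I would manufacture a decision procedure for $\Dp(G)$, contradicting its undecidability. In the spirit of \Cref{thm:adian-rabin-for-sfts}, the reduction sends a presentation $(A,\mathcal F)$ to a system whose presentation is computable (via \Cref{lem:products}) and whose $\mathcal I$-value is one fixed number when $X_{(A,\mathcal F)}$ is empty and a strictly larger fixed number when it is nonempty.

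First I would record that $\mathcal I(\emptyset)$ is the least value of $\mathcal I$: since $\emptyset\sqcup X$ is conjugate to $X$ for every SFT $X$, monotonicity under disjoint unions gives $\mathcal I(X)=\mathcal I(\emptyset\sqcup X)\ge \mathcal I(\emptyset)$. Non-constancy supplies SFTs with $\mathcal I(U)<\mathcal I(V)$, and combining with the previous inequality yields $\mathcal I(\emptyset)\le \mathcal I(U)<\mathcal I(V)$, so $V$ is necessarily nonempty and $\mathcal I(V)>\mathcal I(\emptyset)$. I would fix such a nonempty $V$ once and for all, together with a rational $t$ satisfying $\mathcal I(\emptyset)<t<\mathcal I(V)$; these are constants hardwired into the algorithm below, not recomputed at runtime.

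The reduction is $(A,\mathcal F)\mapsto Z:=X_{(A,\mathcal F)}\times V$, whose presentation is computable by \Cref{lem:products}. If $X_{(A,\mathcal F)}$ is empty then $Z=\emptyset$ and $\mathcal I(Z)=\mathcal I(\emptyset)<t$. If $X_{(A,\mathcal F)}$ is nonempty then, applying monotonicity under products with the nonempty system $X_{(A,\mathcal F)}$, one gets $\mathcal I(Z)=\mathcal I(V\times X_{(A,\mathcal F)})\ge \mathcal I(V)>t$. Hence $X_{(A,\mathcal F)}$ is empty if and only if $\mathcal I(Z)<t$, and in both cases $\mathcal I(Z)\ne t$.

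Finally I would convert this value dichotomy into a terminating computation. Using computability of $\mathcal I$ from the presentation of $Z$, compute rationals $q_n$ with $|q_n-\mathcal I(Z)|<2^{-n}$, and output ``empty'' as soon as $q_n+2^{-n}<t$ and ``nonempty'' as soon as $q_n-2^{-n}>t$. Because $\mathcal I(Z)\ne t$, exactly one of these conditions eventually fires, so the procedure halts and decides $\Dp(G)$, the desired contradiction. The step to watch is precisely this last one: equality of computable reals is undecidable, so computability by itself does not permit comparing $\mathcal I(Z)$ to a threshold. What makes the comparison terminate is the \emph{strict} gap forced by monotonicity, which keeps $\mathcal I(Z)$ bounded away from $t$; ensuring that this gap is genuinely strict (equivalently, that $V$ can be taken nonempty with $\mathcal I(V)>\mathcal I(\emptyset)$) is the one place where the disjoint-union and product hypotheses must both be used.
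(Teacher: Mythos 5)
Your proof is correct, and at its core it is the same argument as the paper's: a many-one reduction to $\Dp(G)$ built from the computability of presentations of products and disjoint unions, followed by comparison of $\mathcal I(Z)$ against a hardwired rational threshold. The construction differs in a small but genuine way. The paper takes any two SFTs $X_0$, $Y_0$ with $\mathcal I(X_0)<q<\mathcal I(Y_0)$ and sets $Z=X_0\sqcup(Y_0\times X)$, so both monotonicity hypotheses enter the reduction itself. You instead first prove the lemma that $\mathcal I(\emptyset)$ is the minimum value of the invariant (this is where the disjoint-union hypothesis is used, via $X\cong\emptyset\sqcup X$), deduce that some nonempty $V$ has $\mathcal I(V)>\mathcal I(\emptyset)$, and then run the purely product-based reduction $Z=X_{(A,\mathcal F)}\times V$ with threshold $\mathcal I(\emptyset)<t<\mathcal I(V)$; in effect you have shown that the paper's $X_0$ may always be taken to be the empty subshift. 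What your route buys: the reduction needs only \Cref{lem:products}, the roles of the two hypotheses are cleanly separated, and you spell out the termination argument --- the strict gap $\mathcal I(Z)\neq t$ is exactly what makes the comparison of a computable real against $t$ halt --- a point the paper passes over with ``we can determine in finite time whether $\mathcal I(Z)>q$ or $\mathcal I(Z)<q$''. What the paper's route buys: no preliminary lemma is needed, since the construction $X_0\sqcup(Y_0\times X)$ forces $\mathcal I(Z)\geq\mathcal I(Y_0)>q$ or $\mathcal I(Z)=\mathcal I(X_0)<q$ directly, with no discussion of where $\mathcal I(\emptyset)$ sits relative to the other values.
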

	\begin{proof}
		Let $\mathcal I$ be an invariant as in the statement, and suppose that  there are two SFTs $X_0$ and $Y_0$ with $\mathcal I(X_0)<\mathcal I(Y_0)$. Let $q$ be a rational number with $\mathcal I(X_0)<q<\mathcal I(Y_0)$. Given a possibly empty SFT $X$, we define $Z$ by 
		\[Z=X_0\sqcup Y_0\times X.\]
		By our assumptions on $\mathcal I$ we have $\mathcal I(Z)< q$ when $X=\emptyset$ and $\mathcal I(Z)> q$  when $X\ne\emptyset$. Since we can compute a presentation for $Z$ from a presentation for $X$ (\Cref{lem:products} and \Cref{lem:disjoint-unions}), and thanks to our assumption on the computability of $\mathcal I$, given $X$ we can determine in finite time whether $\mathcal I(Z)>q$ or $\mathcal I(Z)<q$. This amounts to determining whether $X=\emptyset$. This contradicts our hypothesis on $\Dp(G)$. Thus $\mathcal I(X_0)=\mathcal I (Y_0)$ and  $\mathcal I$ is constant. 
	\end{proof}
	It follows that for every amenable group with undecidable domino problem, topological entropy of SFTs is not computable from presentations. We mention that for some groups beyond $\mathbb{Z}^d$, $d\geq 2$ it is also known the existence of SFTs whose entropy is a non-computable real number  \cite{barbieri_entropies_2021,bartholdi_shifts_2024}. Our result has a much weaker conclusion, but its proof is remarkably simple and covers many other invariants. For instance, it covers invariants designed for zero-entropy systems that are similar to entropy dimension (see \cite{kanigowski_survey_2020}). Our result also holds if the invariant is only defined for nonempty subshifts or is only assumed to be computable for nonempty subshifts:
	\begin{remark}\label{computability-of-invariants-within-a-class}
		Let $\mathscr C$ be a class of $G$-SFTs such that (1) for every $X\in \mathscr C$ and nonempty SFT $Y$ we have $X\times Y\in\mathscr C$, and (2) $\mathscr C$ is closed by disjoint unions. Then  \Cref{invariants-monotone-by-products-and-unions-are-constants} holds within $\mathscr C$:  every dynamical invariant that is defined on $\mathscr C$, is computable in $\mathscr C$, and is  monotone by products and unions on $\mathscr C$, must have constant value on $\mathscr C$. The proof follows the same argument. 
		
		Some dynamical properties are known to imply the computability of topological entropy for $\Z^2$-SFTs  \cite{gangloff_effect_2019,pavlov_entropies_2015}. The observation in the previous paragraph shows that any such property can not verify (1) and (2).  
	\end{remark}
	We finish this section with a result for invariants taking values on abstract partially ordered sets. This result can be applied to recursion-theoretical invariants such as
	the Turing degree of the language of the SFT \cite{jeandel_turing_2013}.% Muchnik degrees, and Medvedev degrees  \cite{simpson_medvedev_2014,barbieri_medvedev_2024}.
	
	\begin{theorem}
		\label{thm:undecidability=000020of=000020monotone=000020invariants}Let $G$ be a finitely generated group with undecidable domino problem. Let $\mathcal{I}$ be a dynamical
		invariant for $G$-SFTs taking values in a partially ordered set $(\mathscr{R},\leq)$,
		which is non-increasing by factor maps, and whose value is minimal
		at the empty subshift. Then for every $r\in\mathscr{R}$ the following
		properties of a $G$-SFT $X$ are either trivial or undecidable: $\mathcal{I}(X)\geq r$, $\mathcal{I}(X)\leq r$, $\mathcal{I}(X)>r$, and $\mathcal{I}(X)<r$. 
	\end{theorem}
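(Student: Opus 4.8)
The plan is to reduce each of the four properties to \Cref{cor:rice-for-monotone-properties}. The key point is that each inequality involving $\mathcal I$ is preserved either to factors or to extensions, according to the direction of the monotonicity of $\mathcal I$, while the hypothesis that $\mathcal I$ is minimal at the empty subshift controls whether $\emptyset$ satisfies the property. A short case distinction then places each property in a situation where the corollary applies, leaving only trivial cases to be dispatched by hand.

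First I would record the preservation and minimality facts. Write $m=\mathcal I(\emptyset)$. Since $\mathcal I$ is a dynamical invariant, each of the four properties is conjugacy-invariant, hence a dynamical property. Since $\mathcal I$ is non-increasing by factor maps, $Y$ a factor of $X$ implies $\mathcal I(Y)\leq \mathcal I(X)$; consequently $\mathcal I(X)\leq r$ and $\mathcal I(X)<r$ are preserved to factors, while $\mathcal I(X)\geq r$ and $\mathcal I(X)>r$ are preserved to extensions. Finally, because the value of $\mathcal I$ is minimal at the empty subshift, $m\leq \mathcal I(X)$ for every $G$-SFT $X$.

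Next I would carry out the case analysis, fixing $r\in\mathscr R$. For $\mathcal I(X)\geq r$: if $r\leq m$, then $\mathcal I(X)\geq m\geq r$ for every $X$ by transitivity, so the property is trivial; otherwise $m\not\geq r$, the empty subshift fails the property, and since the property is preserved to extensions and not satisfied by $\emptyset$, \Cref{cor:rice-for-monotone-properties} gives that it is trivial or undecidable. The property $\mathcal I(X)>r$ is handled identically, replacing $\leq$ by $<$. For $\mathcal I(X)\leq r$: if $m\leq r$ the empty subshift satisfies the property, which is preserved to factors, so \Cref{cor:rice-for-monotone-properties} applies; otherwise $m\not\leq r$ and no SFT can satisfy it, since $\mathcal I(X)\leq r$ would force $m\leq \mathcal I(X)\leq r$, so the property is trivial. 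The property $\mathcal I(X)<r$ is symmetric. In every case the property is trivial or undecidable.

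The only real care needed is bookkeeping: matching each of the four inequalities with the correct preservation direction (factors versus extensions) and the correct requirement on the empty subshift (satisfied versus not satisfied) so that the hypotheses of \Cref{cor:rice-for-monotone-properties} are met, and then using the minimality of $\mathcal I(\emptyset)$ together with transitivity of $\leq$ to settle the complementary cases as trivial. I expect no genuine obstacle beyond this; once the corollary is available, the argument is purely formal.
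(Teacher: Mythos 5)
Your proposal is correct and follows essentially the same route as the paper's proof: observe that each inequality is preserved to factors or to extensions according to its direction, apply \Cref{cor:rice-for-monotone-properties} when the empty subshift's status matches the corollary's hypothesis, and use minimality of $\mathcal{I}(\emptyset)$ together with transitivity to show the property is trivial in the complementary case. The paper merely writes out the case $\mathcal{I}(X)\leq r$ and declares the other three similar, whereas you spell out the bookkeeping for all four; the content is identical.
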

	
	\begin{proof}
		We consider in detail the property $\mathcal I(X)\leq r$, the other three cases are similar.  Let $\mathscr{P}$ be the property $\mathcal{I}(X)\leq r$ and observe that this property is preserved to factors. If the empty subshift verifies $\mathscr{P}$ then our claim follows  from \Cref{cor:rice-for-monotone-properties}. If the empty subshift does not verify $\mathscr{P}$ then this property is trivial: this follows from the transitivity of $\leq$, and the hypothesis that the value of $\mathcal{I}$ is minimal at the empty subshift.
	\end{proof}
	\section{Undecidability of dynamical properties of sofic subshifts}
	
	In this section we prove that if $G$ is a group with undecidable
	domino problem, then all nontrivial dynamical properties of sofic
	$G$-subshifts are undecidable from presentations. 
	
	We start by defining presentations for sofic subshifts. Let $S$ be a finite and symmetric generating set for $G$, and let $\pi\colon S^\ast\to G$ be defined by $\pi(w)=\underline w$. A \textbf{local function presentation} is a function $\mu\colon A^{W}\to B$,
	where $W$ is a finite subset of $\subset S^{\ast}$ and $A$ and $B$
	are finite subsets of $\N$. The local function $\mu_{0}$ presented
	by $\mu$ is defined as follows. We set $D=\pi(W)$, and define   $\mu_{0}\colon A^{D}\to B$
	by $\mu_{0}(p)=\mu(p\circ\pi)$. A \textbf{sofic $G$-subshift presentation} is a tuple $(A,\mathcal{F},\mu,B)$,
	where $A,B\subset\N$ are finite alphabets, $(A,\mathcal{F})$ is
	a $G$-SFT presentation, and $\mu\colon A^{W}\to B$, $W\subset S^{*}$
	is a local function presentation. The sofic subshift associated to
	this presentation $Y_{(A,\mathcal{F},\mu,B)}$ is the
	image of $X_{(A,\mathcal{F})}$ under the topological factor map with
	local function presented by $\mu$. 
	\begin{theorem}
		\label{thm:Rice-theorem-sofic-subshifts} Let $G$ be a finitely generated group with undecidable domino problem.  Then all nontrivial dynamical properties
		of sofic $G$-subshifts are undecidable.
	\end{theorem}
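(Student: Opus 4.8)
The plan is to establish, for every nontrivial dynamical property $\mathscr{P}$ of sofic $G$-subshifts, a many-one reduction $\mathscr{P}\geq_m\Dp(G)$, in the same spirit as \Cref{thm:adian-rabin-for-sfts}. The new ingredient that makes the \emph{full} Rice theorem available for sofic subshifts --- in contrast with the SFT case, where \Cref{prop:decidable-fixed-points} shows it fails --- is that a sofic subshift can be ``switched off'' to the empty subshift by multiplying its defining SFT with an auxiliary SFT whose emptiness is not known in advance, without affecting the image when the auxiliary SFT is nonempty. This replaces the disjoint-union interpolation used for SFTs.

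First I would use nontriviality to fix a sofic $G$-subshift $Y$ whose $\mathscr{P}$-value differs from that of the empty subshift. Such a $Y$ exists: by hypothesis some sofic subshift satisfies $\mathscr{P}$ and some does not, while the empty subshift (which is itself sofic) realizes only one of these two values, so a sofic subshift realizing the other value must exist. In particular $Y$ is nonempty. Fix a presentation $(A,\mathcal{F},\mu,B)$ of $Y$, so that $Y$ is the image of $X_{(A,\mathcal{F})}$ under the factor map $\phi$ with local function presented by $\mu$.

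The core step is a computable map sending an arbitrary SFT presentation $(D,\mathcal{E})$ to a sofic presentation of a subshift $Z$ satisfying $Z=Y$ whenever $X_{(D,\mathcal{E})}$ is nonempty and $Z=\emptyset$ whenever $X_{(D,\mathcal{E})}$ is empty. I would let $Z$ be the image of the product SFT $X_{(A,\mathcal{F})}\times X_{(D,\mathcal{E})}$ under the factor map $\psi$ that discards the second coordinate and applies $\phi$ to the first. If $X_{(D,\mathcal{E})}=\emptyset$ the product SFT is empty, hence so is $Z$; if $X_{(D,\mathcal{E})}\neq\emptyset$ then every configuration of $X_{(A,\mathcal{F})}$ can be paired with some configuration of $X_{(D,\mathcal{E})}$, so $\psi$ surjects onto $\phi(X_{(A,\mathcal{F})})=Y$. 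Concretely, \Cref{lem:products} produces a presentation $(C,\mathcal{G})$ of the product with alphabet encoded through the bijection $\alpha$, and I would define a local function presentation $\nu\colon C^{W}\to B$ by $\nu(q)=\mu(\pi_1\circ q)$, that is, by decoding the first coordinate and composing with $\mu$. The output is the sofic presentation $(C,\mathcal{G},\nu,B)$.

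Finally I would close the reduction. Because $\mathscr{P}$ is conjugacy-invariant and $Y$ and $\emptyset$ take opposite $\mathscr{P}$-values, the subshift $Z$ takes the $\mathscr{P}$-value of $Y$ exactly when $X_{(D,\mathcal{E})}$ is nonempty, and the $\mathscr{P}$-value of $\emptyset$ exactly when $X_{(D,\mathcal{E})}$ is empty. Hence a decision procedure for $\mathscr{P}$ would, after computing $(C,\mathcal{G},\nu,B)$ from $(D,\mathcal{E})$, decide membership in $\Dp(G)$, contradicting its undecidability. The main obstacle I anticipate is bookkeeping rather than conceptual: checking that the ``switch-off'' factor map $\psi$ is faithfully realized at the level of local function presentations, in particular that ignoring one coordinate of the encoded product alphabet yields a legitimate local function presentation $\nu$ over that alphabet with the claimed image. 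The dynamical content --- that the image is exactly $Y$ or exactly $\emptyset$ --- is immediate once this setup is in place.
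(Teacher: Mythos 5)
Your proposal is correct and takes essentially the same route as the paper: both construct, from a witness sofic subshift $Y$ whose $\mathscr{P}$-value differs from that of $\emptyset$, the product of $Y$'s covering SFT with the input SFT (via \Cref{lem:products}) and push it forward through the local function presentation that decodes the coordinate of $Y$'s cover, so that the resulting sofic subshift is exactly $Y$ or exactly $\emptyset$ according to nonemptiness of the input, yielding the reduction to $\Dp(G)$. The only cosmetic difference is that you avoid the paper's preliminary step of replacing $\mathscr{P}$ by its negation by arguing directly with opposite $\mathscr{P}$-values (and your use of $\pi_1$ is the correct projection for your ordering of the product, where the paper's text writes $\pi_2$).
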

	
	\begin{proof}
		Let $\mathscr{P}$ be a nontrivial dynamical property of sofic subshifts. Replacing
		$\mathscr{P}$ by its negation if necessary, we can assume that the
		empty subshift does not satisfy the property. As $\mathscr{P}$ is
		nontrivial, there is a sofic subshift $Y_{+}$ satisfying $\mathscr{P}$. We
		fix this subshift for the rest of the proof. We also fix a presentation
		$(A_{+},\mathcal{F}_{+},\mu_{+},B_{+})$ for $Y_+$, and also $W_{+}\subset A^{\ast}$
		with $\mu_{+}\colon A_{+}^{W_{+}}\to B_{+}$. We define a computable function $f$ whose input is a presentation
		$(A,\mathcal{F})$ of a $G$-SFT, and whose output $f(A,\mathcal{F})$
		is the presentation of a sofic $G$-subshif such that $Y_{f(A,\mathcal{F})}$ has property
		$\mathscr{P}$ if and only if $X_{(A,\mathcal{F})}$ is nonempty. The existence of this function proves that $\mathscr{P}$ is undecidable, as otherwise it could be used to solve the domino problem for $G$.

		On input $(A,\mathcal{F})$, the function $f$ starts by computing a presentation $(A',\mathcal{F}')$,
		such that $X_{(A',\mathcal{F}')}$ is topologically conjugate to the
		direct product $X_{(A_{+},\mathcal{F}_{+})}\times X_{(A,\mathcal{F})}^ {}$. For this we use \Cref{lem:products}. Note that the alphabet $A'$ is equal to $\{\alpha(a_{+},a)\mid a_{+}\in A_{+},\ a\in A\}$, where $\alpha$ was defined in the proof of \Cref{lem:products}, and where we also fixed
		computable functions $\pi_{1},\pi_{2}$ satisfying $\pi_{i}(\alpha(n_{1},n_{2}))=n_{i}$,
		$i=1,2$. Next, we define a local function presentation $\mu\colon A'{}^{W_{+}}\to B_{+}$
		by $p\mapsto\mu(p)=\mu_{0}(\pi_{2}\circ p)$. Finally, the output
		of the computable function $f$ is $(A',\mathcal{F}',\mu',B_{+})$. It is clear that $f$ has the mentioned properties.
	\end{proof}
	In the language of many one reductions we proved that every nontrivial property of sofic subshifts $\mathscr{P}$ that is not verified by the empty subshift satisfies $\mathscr{P}\geq_m\Dp(G)$.
	%\begin{comment}
	\section{Further remarks}\label{sec:remarks}
	The results presented here can be used to show the undecidability of many dynamical properties of SFTs of common interest in the case of a group with undecidable domino problem. However, we consider that the frontier between decidability and undecidability is rather unclear as we know very little about the decidable region in the ``swamp of undecidability''. That is, we know very few decidable dynamical properties. 
	
	A natural generalization of \Cref{prop:decidable-fixed-points} is as follows. Given an SFT $X$, let $\mathscr E(X)$ be the property of admitting an embedding from $X$.  \Cref{prop:decidable-fixed-points} shows that this property is decidable when $|X|=1$, and the proof can be easily generalized to the case where $X$ is finite. The following question raises naturally:  
	\begin{question}
		Is there an infinite SFT $X$ such that $\mathscr E (X)$ is decidable?
	\end{question} 
	We also mention that in this work we have focused on properties preserved by conjugacy, but most of the arguments presented here can be adapted to set properties of SFTs. In this context an analogous of $\mathscr E(X)$ is the property $\mathcal C(X)$ of containing $X$. It turns out that the decidability of $\mathcal C(X)$ admits a characterization. The following argument was communicated to us by J.Kari. 
	\begin{proposition}
		$\mathcal C (X)$ is decidable if and only if the set 
		\[L(X)=\{p\colon W\to A \ \mid \text{$W\subset S^\ast$ is finite and $p$ appears in some $x\in X$}\}\]
		is decidable.
	\end{proposition}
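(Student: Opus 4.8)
The plan is to prove both directions of the equivalence between the decidability of $\mathcal C(X)$ and the decidability of the language $L(X)$. Here $\mathcal C(X)$ is the set of SFT presentations $(A,\mathcal F)$ such that $X \subset X_{(A,\mathcal F)}$ (after identifying alphabets appropriately), and $L(X)$ is the set of finite pattern presentations that actually appear somewhere in a configuration of $X$.

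For the easier direction, suppose $L(X)$ is decidable. I would argue that $X \subset X_{(A,\mathcal F)}$ holds if and only if no forbidden pattern presentation of $\mathcal F$ appears anywhere in any configuration of $X$. Since $X_{(A,\mathcal F)}$ is defined by forbidding exactly the pattern presentations in the finite set $\mathcal F$, we have $X \subset X_{(A,\mathcal F)}$ precisely when every $p \in \mathcal F$ fails to appear in $X$, i.e. when $p \notin L(X)$ for each $p \in \mathcal F$. Because $\mathcal F$ is finite, this is a finite number of membership queries to the decidable set $L(X)$, so $\mathcal C(X)$ is decidable. (One must be mild care about the encoding: a forbidden pattern presentation over alphabet $A$ only constrains $X$ through the letters of $X$'s own alphabet, but this is a routine bookkeeping matter.)

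For the converse, suppose $\mathcal C(X)$ is decidable; I want to decide, given a finite $p\colon W\to A$, whether $p \in L(X)$. The key idea is to build from $p$ an SFT presentation whose SFT contains $X$ if and only if $p$ does \emph{not} appear in $X$. Concretely, given $p$, I would construct the presentation $(A, \{p\})$ that forbids exactly the single pattern presentation $p$. Then $X \subset X_{(A,\{p\})}$ holds if and only if $p$ does not appear in any configuration of $X$, which is exactly the condition $p \notin L(X)$. Thus $p \in L(X)$ if and only if $(A,\{p\}) \notin \mathcal C(X)$, and since membership in $\mathcal C(X)$ is decidable and the map $p \mapsto (A,\{p\})$ is clearly computable, $L(X)$ is decidable.

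The main subtlety I anticipate is not logical depth but the interface between the two notions of ``presentation'': $\mathcal C(X)$ takes as input an SFT presentation $(A,\mathcal F)$ while $L(X)$ takes as input a single pattern presentation $p$, and one must ensure the reductions in both directions are genuinely computable and respect the alphabet conventions. In particular one should confirm that forbidding a single pattern presentation is itself a legitimate SFT presentation and that containment $X\subset X_{(A,\mathcal F)}$ really is equivalent to the non-appearance of all of $\mathcal F$ in $X$ — both of which follow directly from the definition of $X_{(A,\mathcal F)}$ as the set of configurations avoiding $\mathcal F$. Once these encoding details are fixed, each direction is a one-line computable reduction, so I expect the proof to be short.
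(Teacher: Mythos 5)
Your proof is correct and takes essentially the same approach as the paper: both directions use exactly the same reductions, namely deciding $X\subset X_{(B,\mathcal G)}$ by finitely many membership queries $p\in L(X)$ for $p\in\mathcal G$, and deciding $p\in L(X)$ by querying whether the single-pattern presentation $(A,\{p\})$ lies in $\mathcal C(X)$. The alphabet bookkeeping you flag is glossed over in the paper's proof as well, so there is no substantive difference.
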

	\begin{proof}
		Let $(A,\mathcal F)$ be a presentation for $X$, and suppose that $\mathcal C(X)$ is decidable. Then for every pattern presentation $p$ we have $p\not\in L(X)$ if and only if $X$ is contained in $X_{(A,\{p\})}$. This proves the forward implication. For the backward implication suppose that $L(X)$ is decidable, and let $(B,\mathcal G)$ be an SFT presentation.  It follows from the definitions that $(B,\mathcal G)$ verifies $\mathcal C(X)$ if and only if no element from $\mathcal G$ appears in $L(X)$. This is decidable by hypothesis and thus the backward implication is proved.    
	\end{proof}
	\section*{Acknowledgements}
	This work has benefited from helpful suggestions and remarks  of different people. I thank the anonymous reviewer, C.Rojas, S. Barbieri, J. Kari, B. Hellouin de Menibus, T. Meyerovich, and C. F. Nyberg-Brodda. This work was supported by  ANID 21201185, ANID/CENIA FB210017, MSCA 731143, and a grant from the Priority Research Area SciMat under the Strategic Programme Excellence Initiative at Jagiellonian University.
	\bibliographystyle{abbrv}
	\bibliography{bibliography}
\end{document}